\title{Locally noetherian quiver representations}
\author{Henning Krause}
\address{Fakult\"at f\"ur Mathematik\\
Universit\"at Bielefeld\\ D-33501 Bielefeld\\ Germany}
\email{hkrause@math.uni-bielefeld.de}
\theoremstyle{plain}
\newtheorem{thm}{Theorem}[section]
\newtheorem{prop}[thm]{Proposition}
\newtheorem{lem}[thm]{Lemma} 
\newtheorem{cor}[thm]{Corollary}
\theoremstyle{definition}
\newtheorem{defn}[thm]{Definition}
\newtheorem{exm}[thm]{Example}
\theoremstyle{remark}
\newtheorem{rem}[thm]{Remark}
\numberwithin{equation}{thm}
\newcommand{\colim}{\operatorname*{colim}}
\newcommand{\Hom}{\operatorname{Hom}}
\newcommand{\Lex}{\operatorname{Lex}}
\renewcommand{\min}{\operatorname{min}}
\newcommand{\Mod}{\operatorname{Mod}}
\newcommand{\Noeth}{\operatorname{Noeth}}
\newcommand{\one}{\mathds 1}
\newcommand{\Rep}{\operatorname{Rep}}
\newcommand{\Ab}{\mathrm{Ab}}
\newcommand{\op}{\mathrm{op}}
\newcommand{\Set}{\mathrm{Set}}
\newcommand{\iso}{\xrightarrow{\raisebox{-.4ex}[0ex][0ex]{$\scriptstyle{\sim}$}}}
\newcommand{\longiso}{\xrightarrow{\ \raisebox{-.4ex}[0ex][0ex]{$\scriptstyle{\sim}$}\ }}
\newcommand{\lto}{\longrightarrow}
\newcommand*{\intref}[2]{\def\tmp{#1}\ifx\tmp\empty\hyperref[#2]{\ref*{#2}}\else\hyperref[#2]{#1~\ref*{#2}}\fi}
\def\A{\mathcal A} 
\def\C{\mathcal C}
\def\Q{\mathcal Q}
\def\X{\mathcal X}
\def\bbN{\mathbb N}
\def\a{\alpha}
\def\p{\phi}
\begin{document}

\keywords{Gröbner enrichment, locally noetherian category, noetherian quiver, path
  algebra, quiver representation}

\subjclass[2020]{16G20 (primary), 18E10 (secondary)}

\begin{abstract}
  It is shown that a quiver is left noetherian if and only if the category
  of quiver representations in any locally noetherian abelian category
  is again locally noetherian. Here, locally noetherian means that
  any object is the directed union of its noetherian subobjects. For a
  quiver to be left noetherian means that the left ideals of paths starting at any
  fixed vertex satisfy the ascending chain condition. The proof
  generalises to representations of any small category that admits a
  Gröbner enrichment.
\end{abstract}

\date{August 10, 2024}

\maketitle

\section{Introduction}

Hilbert's basis theorem asserts that a polynomial ring $A[X]$ in one
indeterminate is left noetherian if the coefficient ring $A$ is left
noetherian \cite{Hi1890}.  The category of $A[X]$-modules identifies
with the category of $A$-linear representations of the \emph{Jordan
  quiver} consisting of one vertex and a single loop.  Thus an
equivalent formulation of Hilbert's theorem is that the category of
$A$-linear representations of the Jordan quiver is locally noetherian
if the category of $A$-modules is locally noetherian.

We generalise Hilbert's  theorem as follows and write
$\Rep(Q,\A)$ for the category of representation of a quiver $Q$ in an
abelian category $\A$.

\begin{thm}\label{th:repnoeth}
 For a quiver $Q$ and a left noetherian ring $A$ the following conditions are equivalent.
\begin{enumerate}
\item The quiver $Q$ is left noetherian.
\item The abelian category $\Rep(Q,\A)$ is locally noetherian for every 
  locally noetherian abelian category $\A$.
\item The abelian category $\Rep(Q,\Mod A)$ is locally noetherian. 
\end{enumerate}
\end{thm}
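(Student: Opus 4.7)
The easy directions dovetail quickly. The implication (2)$\Rightarrow$(3) is immediate because $\Mod A$ is locally noetherian when $A$ is left noetherian, so I would only record this observation. For (3)$\Rightarrow$(1), fix a vertex $i\in Q$ and consider the representable representation $P_i$ with $P_i(j)=A^{Q(i,j)}$ and the obvious $Q$-action given by concatenation of paths. Then $P_i$ is generated by a single element sitting at vertex $i$, hence finitely generated and, by (3), noetherian. The subrepresentations of $P_i$ are in order-preserving bijection with the left ideals of paths starting at $i$, so ACC on the latter is forced. Hence $Q$ is left noetherian.

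The substance is (1)$\Rightarrow$(2). My plan is to show, under the assumption that $Q$ is left noetherian and $\A$ is locally noetherian, that the family of representations of the form $P_i\otimes S$ with $i$ a vertex and $S\in\A$ noetherian forms a generating set of noetherian objects of $\Rep(Q,\A)$. Such objects clearly generate, since any representation $X$ is the directed union of the subrepresentations generated by noetherian subobjects of the values $X(i)$. The real work is that each $P_i\otimes S$ is noetherian.

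For this I would run a Gröbner-style argument, which the paper's abstract indicates is the unifying idea. The ingredients are: (a) a well-ordering on $Q(i,-)=\bigsqcup_j Q(i,j)$ compatible with composition from the right, so that left multiplication by an arrow shifts leading terms in a controlled way; and (b) the assumption that $Q$ is left noetherian, which says exactly that any set of paths starting at $i$ generates, as a left ideal, the same thing as some finite subset of it. Given an ascending chain $M_0\subseteq M_1\subseteq\cdots$ of subrepresentations of $P_i\otimes S$, I would assign to each $M_n$ a ``leading coefficient'' datum: for every finite subset $\Sigma\subseteq Q(i,-)$ ordered by my well-order, take the subobject of $S$ consisting of leading coefficients along $\Sigma$ of sections of $M_n$. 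Because $S$ is noetherian in $\A$ and because the ACC on left ideals of paths controls how $\Sigma$ can grow, a Noether-style two-step stabilization (first in the leading-term data, then in the residual) forces the chain to stabilize.

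The main obstacle is the bookkeeping in step (b): translating Hilbert's classical leading-coefficient argument into a path-by-path induction along a well-founded order on $Q(i,-)$, and checking that ``left noetherian quiver'' is \emph{exactly} the combinatorial hypothesis needed to make the induction terminate. Once this is in place, one recovers the implication (1)$\Rightarrow$(2) by noting that any finitely generated subobject of a representation in $\Rep(Q,\A)$ is a quotient of a finite direct sum of $P_i\otimes S$'s with $S\in\A$ noetherian, and quotients and finite sums preserve noetherianity. The statement for arbitrary locally noetherian $\A$ then follows.
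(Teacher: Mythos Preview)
Your overall strategy matches the paper's: the heart of (1)$\Rightarrow$(2) is showing that $S[Q(x,-)]$ (your $P_i\otimes S$) is noetherian whenever $S\in\A$ is noetherian, via a Gr\"obner-style leading-term argument, and this is precisely the content of the paper's Lemma~\ref{le:free}. Two points, however, need correction.

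First, in (3)$\Rightarrow$(1) the subrepresentations of $A[Q(x,-)]$ are \emph{not} in bijection with left ideals of paths from $x$: any nontrivial left ideal $I\subseteq A$ already gives a subrepresentation $I[Q(x,-)]$ not of that form, and there are many others. What is true, and sufficient, is that ideals of $Q(x)$ embed order-preservingly into subrepresentations; this is the paper's Lemma~\ref{le:ring}, and the ACC then transfers in the direction you need.

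Second, and more substantively, your argument for (1)$\Rightarrow$(2) tacitly assumes $\A$ is cocomplete, since otherwise the objects $P_i\otimes S=S[Q(x,-)]$ need not exist in $\Rep(Q,\A)$ at all (they require a coproduct of copies of $S$ indexed by the possibly infinite sets $Q(x,y)$). The paper's definition of \emph{locally noetherian} deliberately does not require cocompleteness---finite-dimensional vector spaces over a field are an explicit example---so your generating family simply is not available in general. The paper closes this gap by first reducing to $\A$ essentially small and then embedding $\A$ into its completion $\bar\A=\Lex((\Noeth\A)^\op,\Ab)$ via Proposition~\ref{pr:locnoeth}; one runs the Gr\"obner argument inside the cocomplete category $\Rep(Q,\bar\A)$ and pulls the conclusion back using that $\Rep(Q,\A)\subseteq\Rep(Q,\bar\A)$ is full and closed under subobjects. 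Without this completion step your proof only covers Grothendieck categories.
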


For a quiver to be \emph{left noetherian} simply means that the left
ideals of paths starting at any fixed vertex satisfy the ascending
chain condition, and the quiver is \emph{right noetherian} if the
analogous condition for paths terminating at any fixed vertex is
satisfied; see Definition~\ref{de:noeth}.

When the quiver $Q$ has only finitely many vertices the category
$\Rep(Q,\Mod A)$ identifies with the module category of the \emph{path
  algebra} $A[Q]$. Thus we obtain for $A[Q]$ a simple characterisation to be
left noetherian; see Corollary~\ref{co:algebra}.

There is an analogue of the above theorem for \emph{locally finite
  categories} of representations. This requires both chain conditions
for ideals in a quiver; see Theorem~\ref{th:locfinite}. Moreover, we
provide a generalisation of the above theorem which covers
representations of any \emph{small category}. This involves the notion
of a \emph{Gröbner enrichment} and the proof remains basically
unchanged; see Theorem~\ref{th:repnoeth-general}.

Theorem~\ref{th:repnoeth} has some predecessors, but in this form it
seems to be new, because we give a simple definition/description of a
noetherian quiver and we consider representations in any locally
noetherian abelian category. In fact, our definition of a locally
noetherian abelian category is more general than the usual one
\cite[II.4]{Ga1962}; it simply requires that any object is the
directed union of its noetherian subobjects. For example, the category
of finite dimensional vector spaces over a field is locally
noetherian. We consider representations in abelian categories that are
not necessarily module categories; this requires some new techniques
since elements cannot be used.

Let us provide the relevant references that contain predecessors of
the above theorem. The first is \cite{HR1983} where it is assumed that
the quiver has a finite set of vertices, while \cite{Ri1986} considers
representations of arbitrary quivers and even more general
categories. The description of noetherian quivers in
\cite{HR1983,Ri1986} is somewhat complicated. So the results from
\cite{Ri1986} are reproved in \cite{En2002}, using a more transparent
(though incorrect, as pointed out in \cite{AHV2015}) description of
noetherian quivers. All mentioned papers restrict to representations
in module categories. For a survey on more recent generalisations
of Hilbert's basis theorem, including their applications, see \cite{Dj2016}.

\section{Noetherian quivers}

Let $Q=(Q_0,Q_1,s,t)$ be a \emph{quiver}, given by a set of vertices $Q_0$
and a set of arrows $Q_1$ such that each arrow $\a$ starts at $s(\a)$
and terminates at $t(\a)$.  A \emph{path} $\p\colon x\to y$ of
\emph{length} $n$ in $Q$ is a sequence $\a_n\ldots\a_2\a_1$ of arrows
$\a_i$ such that $s(\a_{i+1})=t(\a_i)$ for all $i$, where
$s(\p):=x=s(\a_1)$ and $t(\p):=y=t(\a_n)$. For each $x\in Q_0$ there is
the \emph{trivial path} $x\to x$ of length zero. For any pair of
vertices $x,y$ in $Q$ let $Q(x,y)$ denote the set of paths $x\to
y$. We write $\Q$ for the \emph{path category} with set of objects
$Q_0$ and set of morphism $x\to y$ given by $Q(x,y)$ for all
$x,y\in Q_0$. The concatenation of paths yields the composition in
$\Q$.

For a vertex $x$ in $Q$ let $Q(x)$ denote the set of paths starting at
$x$. This set is partially ordered via
\[\p\le \psi\quad :\iff \quad  \p=\chi\psi \;\text{ for some path }\; \chi\colon
  t(\psi)\to t(\p).\]

Let $P=(P,\le)$ be a poset. An \emph{ideal} of $P$ is a non-empty
subset $I\subseteq P$ such that $x\le y$ and $y\in I$ implies
$x\in I$. The poset $P$ is \emph{noetherian} if the ascending chain
condition holds for the ideals in $P$. We refer to
\cite[VIII.1]{Bi1948} for basic facts about noetherian
posets.\footnote{Sometimes a poset is defined to be \emph{noetherian}
  if the ascending chain condition holds for chains of
  \emph{elements}. Clearly, one condition implies the other (by
  passing from an element to the ideal generated by that element),
  but the conditions are not equivalent.}  For instance, it is easily checked
that a poset is noetherian if and only if every ideal is generated by
finitely many elements. For later use we record another well known
characterisation.

\begin{lem}\label{le:poset}
  A poset is noetherian if and only if for every infinite sequence
  $(x_i)_{i\in\bbN}$ of elements there is a map $\nu\colon\bbN\to\bbN$
  such that $i<j$ implies $\nu(i)<\nu(j)$ and $x_{\nu(j)}\le
  x_{\nu(i)}$.
\end{lem}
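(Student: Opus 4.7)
The plan is to prove both implications, using the characterisation that a poset is noetherian if and only if every ideal is finitely generated. For the easier direction ($\Leftarrow$), I proceed by contrapositive: an infinite strictly ascending chain of ideals produces a sequence that admits no weakly descending subsequence. For the harder direction ($\Rightarrow$), I classify indices of a given sequence $(x_i)$ as "peaks" or "non-peaks" and use an ascending chain of ideals to rule out the bad case.

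For the implication "subsequence property $\Rightarrow$ noetherian", suppose we had a strictly ascending chain of ideals $I_0\subsetneq I_1\subsetneq I_2\subsetneq\cdots$. Choose $x_i\in I_i\setminus I_{i-1}$ (and $x_0\in I_0$). If some strictly increasing $\nu\colon\bbN\to\bbN$ satisfied $x_{\nu(j)}\le x_{\nu(i)}$ for all $i<j$, then for $j\ge 1$ we would have $x_{\nu(j)}\le x_{\nu(0)}\in I_{\nu(0)}$, and since $I_{\nu(0)}$ is an ideal, $x_{\nu(j)}\in I_{\nu(0)}\subseteq I_{\nu(j)-1}$, contradicting $x_{\nu(j)}\notin I_{\nu(j)-1}$.

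For the implication "noetherian $\Rightarrow$ subsequence property", fix a sequence $(x_i)_{i\in\bbN}$ and call an index $i$ a \emph{peak} if $x_j\not\le x_i$ for all $j>i$. I claim the set $S$ of peaks is finite. Otherwise, enumerate $S=\{i_0<i_1<\cdots\}$ and form the ideals $J_n$ generated by $\{x_{i_0},\dots,x_{i_n}\}$. If $J_n=J_{n+1}$ for some $n$, then $x_{i_{n+1}}\le x_{i_k}$ for some $k\le n$, which contradicts $i_k$ being a peak since $i_{n+1}>i_k$; hence $J_0\subsetneq J_1\subsetneq\cdots$ is an infinite strictly ascending chain, contradicting noetherianity. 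With $S$ finite, pick any $\nu(0)>\max S$; then $\nu(0)$ is not a peak, so some $j>\nu(0)$ satisfies $x_j\le x_{\nu(0)}$, and we set $\nu(1)$ to be such a $j$. Inductively, $\nu(k)>\max S$ is not a peak, so we find $\nu(k+1)>\nu(k)$ with $x_{\nu(k+1)}\le x_{\nu(k)}$, and transitivity gives $x_{\nu(j)}\le x_{\nu(i)}$ for $i<j$.

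The main conceptual step is the peak argument in the second implication, since merely knowing that each element has a smaller successor does not immediately produce a \emph{uniform} subsequence; the noetherian hypothesis is used precisely to eliminate the obstruction posed by infinitely many maximal positions, after which a straightforward recursive selection works.
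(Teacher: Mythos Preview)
Your proof is correct and complete; in fact you prove both directions, whereas the paper only writes out the forward implication and leaves the converse to the reader. For the direction ``noetherian $\Rightarrow$ subsequence property'' your argument differs genuinely from the paper's. The paper first shows that in any infinite sequence some $x_i$ has infinitely many later terms $x_j\le x_i$ (by stabilising the chain of ideals generated by initial segments and applying pigeonhole), and then builds $\nu$ recursively so that at every stage there remain infinitely many valid continuations. Your route is the classical Erd\H{o}s--Szekeres ``peak'' argument: you use noetherianity once to show there are only finitely many peak indices, after which every index has at least one valid successor and a simple greedy recursion finishes. Your approach is arguably cleaner and invokes the noetherian hypothesis only once up front; the paper's approach, on the other hand, yields the slightly stronger intermediate fact that one can always choose $\nu(n)$ with infinitely many continuations, which is sometimes convenient in applications.
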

\begin{proof}
  We prove the direction which is used later on and leave the other
  one to the interested reader. So assume  the poset is
  noetherian.  First oberserve that for every infinite sequence
  $(x_i)_{i\in\bbN}$ of elements there exists $i\in\bbN$ such that
  $x_j\le x_i$ for infinitely many $j\in\bbN$. This follows by looking
  at the ascending chain of ideals generated by the elements
  $x_0,\ldots,x_n$, which must stabilise.
Now define $\nu\colon\bbN\to\bbN$ recursively by
  taking for $\nu(0)$ the smallest $i\in\bbN$ such that $x_j\le x_i$
  for infinitely many $j\in\bbN$. For $n> 0$ set \[\nu(n)=\min\{i >
  \nu(n-1)\mid x_j\le x_i\le x_{\nu(n-1)} \text{ for infinitely many
  }j\in\bbN\}.\qedhere\]
\end{proof}

The multiplication of paths in $Q$ yields the following notion of an
ideal. A non-empty set of paths $I$ in $Q$ is a \emph{left ideal} if
for any pair $\p,\psi$ of paths with $s(\psi)=t(\p)$ the condition
$\p\in I$ implies $\psi\p\in I$. A \emph{right ideal} of paths is
defined analogously.  Clearly, a subset $I\subseteq Q(x)$ is a poset
ideal if and only if it is a left ideal of paths.

\begin{defn}\label{de:noeth}
A quiver $Q$ is called \emph{left noetherian} if for each vertex $x$ the poset
$Q(x)$ is noetherian. This means the left ideals of paths starting at
any fixed vertex satisfy the ascending chain condition. The quiver is
\emph{right noetherian} if the right ideals of paths terminating at
any fixed vertex satisfy the ascending chain condition.
\end{defn}

This local condition can be checked by counting the number of
\emph{maximal paths} starting at a fixed vertex. By definition,
this is either a path $\a_n\ldots\a_2\a_1$ such that there is no arrow
starting at $t(\a_n)$, or it is a path $\ldots\a_2\a_1$ of
infinite length.

\begin{prop}\label{pr:quiver}
  Let $Q$ be a quiver. For a vertex $x$ the following conditions are equivalent.
\begin{enumerate}
\item The poset $Q(x)$ is noetherian.
\item For all  $\p\in Q(x)$ there are only finitely many
  arrows starting at $t(\p)$,  and for almost all $\p\in Q(x)$ there is at most one
 arrow starting at $t(\p)$.
\item There are only finitely many maximal paths
  starting at $x$.
\end{enumerate}
\end{prop}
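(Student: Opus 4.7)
My plan is to prove the cyclic implications $(1) \Rightarrow (2) \Rightarrow (3) \Rightarrow (1)$, using throughout that $\phi \le \psi$ in $Q(x)$ means $\psi$ is a prefix of $\phi$, so ideals of $Q(x)$ are exactly the left ideals of paths starting at $x$, and any pairwise incomparable family of paths generates a strictly ascending chain of finitely generated ideals.

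For $(1) \Rightarrow (2)$ I treat the two clauses of (2) separately. If $t(\phi)$ had infinitely many outgoing arrows $\alpha_1, \alpha_2, \ldots$, then the paths $\alpha_i\phi$ would all have length $|\phi|+1$ and be pairwise distinct, hence pairwise incomparable, contradicting (1). For the ``almost all'' clause, assume infinitely many distinct $\phi \in Q(x)$ have $t(\phi)$ branching; Lemma~\ref{le:poset} extracts a subsequence forming a descending chain $\psi_1 > \psi_2 > \psi_3 > \cdots$ in $Q(x)$, so each $\psi_{k+1}$ properly extends $\psi_k$ via some first arrow $\alpha_k$ at $t(\psi_k)$. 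Choose an arrow $\beta_k \neq \alpha_k$ at $t(\psi_k)$. The side branches $\beta_k\psi_k$ form an antichain: a length count rules out $\beta_l\psi_l \le \beta_k\psi_k$ for $k<l$, while comparing the $(|\psi_k|{+}1)$-st arrow — which is $\beta_k$ in $\beta_k\psi_k$ but $\alpha_k$ in $\beta_l\psi_l$ — rules out $\beta_k\psi_k \le \beta_l\psi_l$. This antichain contradicts (1).

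For $(2) \Rightarrow (3)$ I work with the tree $T_x$ of elements of $Q(x)$ under single-arrow extension. Under (2) the set $B$ of branching vertices in $T_x$ is finite and every vertex has finite out-degree; set $N := \sum_{\psi \in B}(\deg(\psi)-1)$. I prove by induction on $N$ that $T_x$ has at most $1 + N$ maximal branches (dead-end paths plus infinite rays). If $N = 0$ then $T_x$ is a single chain, giving one branch. If $N \ge 1$, pick $\psi \in B$ of maximal length; no proper extension of $\psi$ is branching, so each of its $\deg(\psi)$ children begins a chain and contributes one maximal branch through $\psi$. Pruning all but one of these children yields a tree of total branching $N - (\deg(\psi)-1)$, to which the induction hypothesis applies, and combining this with the $\deg(\psi)-1$ extra branches produces the bound $1+N$.

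For $(3) \Rightarrow (1)$ I verify the criterion of Lemma~\ref{le:poset}. Given any sequence $(\phi_i)$ in $Q(x)$, extend each $\phi_i$ arrow by arrow to a maximal path $\hat\phi_i$. By (3) the set of maximal paths is finite, so pigeonhole yields an infinite subsequence all of whose extensions coincide with some fixed maximal path $\hat\phi$. Along this subsequence every $\phi_i$ is a prefix of $\hat\phi$, hence any two are comparable; a further subsequence of either constant or strictly increasing length then delivers the required strictly increasing $\nu$ with $\phi_{\nu(j)} \le \phi_{\nu(i)}$ for $i<j$. The main obstacle I anticipate is the ``almost all'' clause of $(1) \Rightarrow (2)$, where the antichain construction needs both Lemma~\ref{le:poset} to fix a chain of branching vertices and a careful arrow-position comparison to confirm pairwise incomparability; the remaining implications reduce to direct combinatorial manipulations of the tree of paths.
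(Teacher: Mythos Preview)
Your argument is sound and follows the same cyclic scheme $(1)\Rightarrow(2)\Rightarrow(3)\Rightarrow(1)$ as the paper, but there is a small bookkeeping slip in the antichain step of $(1)\Rightarrow(2)$: the two justifications are interchanged. For $k<l$ the relation $\beta_k\psi_k \le \beta_l\psi_l$ says that $\beta_l\psi_l$ is a prefix of $\beta_k\psi_k$, which is excluded by \emph{length} since $|\beta_l\psi_l|=|\psi_l|+1>|\psi_k|+1=|\beta_k\psi_k|$; conversely $\beta_l\psi_l \le \beta_k\psi_k$ says that $\beta_k\psi_k$ is a prefix of $\beta_l\psi_l$, and this is what the comparison of the $(|\psi_k|{+}1)$-st arrow ($\beta_k$ versus $\alpha_k$) rules out. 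With the labels swapped the antichain is established and the contradiction goes through exactly as you intend.

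Apart from this, your treatment of $(2)\Rightarrow(3)$ and $(3)\Rightarrow(1)$ differs from the paper's. The paper disposes of $(2)\Rightarrow(3)$ in one line by bounding the number of maximal paths by $d^n$, where $n$ counts the branching paths and $d$ is the maximal branching degree; your inductive pruning yields the sharper bound $1+\sum_{\psi\in B}(\deg(\psi)-1)$ at the cost of more work. For $(3)\Rightarrow(1)$ the paper argues directly that every ideal $I\subseteq Q(x)$ is finitely generated, taking as generators the shortest prefix of each maximal path that lies in $I$; you instead verify the sequential criterion of Lemma~\ref{le:poset} by extending each term to a maximal path and pigeonholing. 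Both routes are correct; the paper's are more economical, while yours makes the tree structure of $Q(x)$ more explicit and extracts a better constant in $(2)\Rightarrow(3)$.
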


\begin{proof}
  (1) $\Rightarrow$ (2): Let $\p\in Q(x)$.  The ideal of $Q(x)$ that is generated by all
  paths $\a \p$ with $\a$ an arrow starting at $t(\p)$ is finitely generated. It follows that there
  are only finitely many arrows starting at $t(\p)$. Suppose
  $\p_1,\p_2,\ldots$ is an infinite sequence of paths such that there
  is more than one arrow starting at each $t(\p_i)$. Because  $Q(x)$
  is noetherian, we may assume $\p_{i+1}\le \p_{i}$ for all $i$. For each
  $i$ we may choose $\a_i$ such that  $\p_{i+1}\not\le\a_i\p_i $. Then
the ideal of $Q(x)$ generated by  $\{ \a_i\p_i\mid i\in\bbN\}$ cannot
be generated by finitely many elements. This is a contradiction.

(2) $\Rightarrow$ (3): Let $n$ be the number of paths $\p\in Q(x)$ such
that there is more than one arrow starting at $t(\p)$, and let $d$ be
the maximal number of such arrows. Then there are at most $d^n$
maximal paths starting at $x$.

(3) $\Rightarrow$ (1):  Let $I\subseteq Q(x)$ be an
ideal. For each maximal path starting at $x$ and given by a (finite or infinite)
sequence of arrows
$\ldots\a_2\a_1$ we check if there exists $n\in\bbN$ such that $\a_n\ldots\a_2\a_1$
belongs to $I$. In that case we choose $n$ minimal, and it is easily
checked that $I$ is generated by all paths of this form. 
\end{proof}

Let us add another criterion for $Q(x)$ to be noetherian, following
\cite{HR1983, Ri1986}. We consider the \emph{path component} $Q_x$, so the
full subquiver of $Q$ with set of vertices $y\in Q_0$ that admit a
path $x\to y$. An \emph{oriented cycle} is a path $y\to y$ of length
at least one.

\begin{prop}\label{pr:quiver2}
  Let $Q$ be a quiver and $x$ a vertex. Then the poset $Q(x)$ is
  noetherian if and only if there are arrows $\a_1,\ldots,\a_n$ 
  such that there is a decomposition
  \begin{equation}\label{eq:decomp}
    Q_x\smallsetminus \{\a_1,\ldots,\a_n\}=Q^0\amalg Q^1\amalg
    \cdots\amalg Q^n
  \end{equation}
  into full and pairwise disjoint subquivers satisfying the following:
\begin{enumerate}
\item The quiver $Q^0$ has only finitely many vertices and arrows, and
  there is precisely one arrow starting at each vertex lying on an oriented cycle.
\item For $i\neq 0$ each quiver $Q^i$ is isomorphic
  to the infinite quiver \[\circ\lto\circ\lto\circ\lto\cdots.\]
\item Each arrow $\a_i$ 
  starts at a vertex of $Q^0$ not lying on an oriented cycle and terminates at the source of $Q^i$.
\end{enumerate}
\end{prop}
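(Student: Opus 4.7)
The plan is to derive both directions from Proposition~\ref{pr:quiver}, the key invariant being the number of maximal paths from $x$.

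For the $(\Leftarrow)$ direction, assume the decomposition is given; I would verify condition (3) of Proposition~\ref{pr:quiver}. Any maximal path from $x$ either stays inside the finite $Q^0$---terminating at a dead end or becoming trapped forever in one of its finitely many oriented cycles, on which the unique outgoing arrow at each cycle vertex forces circulation---or eventually takes one of the $n$ bridging arrows $\a_i$ into the linear tail $Q^i$, which then determines the continuation uniquely. The portion inside $Q^0$ before either event avoids cycles (once on a cycle the path stays there), hence is bounded in length by the size of $Q^0$. Only finitely many maximal paths from $x$ arise, so $Q(x)$ is noetherian.

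For the $(\Rightarrow)$ direction, assume $Q(x)$ is noetherian, and let $B$, $C$, $D$ denote the sets of branching vertices, cycle vertices, and dead ends in $Q_x$. All three are finite: $B$ by Proposition~\ref{pr:quiver}(2); $D$ because each finite maximal path ends at a dead end and there are only finitely many maximal paths by (3); and $C$ because each cycle vertex must have a unique outgoing arrow (else (2) would fail via infinitely many paths ending at a branching cycle vertex), so distinct cycles are vertex-disjoint, each supports a distinct infinite maximal path, and (3) bounds their total. Set $L := B\cup C\cup D$. Let $T$ consist of those $y\in Q_x$ with unique outgoing arrow whose infinite forward orbit avoids $L$, and declare $y\sim y'$ if these orbits eventually coincide; this partitions $T$ into finitely many classes $T_1,\ldots,T_n$, each carrying an infinite common tail $c_0^\ell\to c_1^\ell\to\cdots$. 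For each class I would choose $k_\ell$ so that for all $i\ge k_\ell$ the vertex $c_i^\ell$ has a unique incoming arrow in $Q_x$, and then set $Q^\ell := \{c_i^\ell\mid i\ge k_\ell\}$ with its induced linear arrows, $\a_\ell$ the unique incoming arrow into $c_{k_\ell}^\ell$, and $Q^0 := Q_x\setminus\bigcup_\ell Q^\ell$. Then $Q^0$ decomposes into $L$, the finitely many branch vertices of each class (vertices in $T_\ell$ off the common tail), the finitely many initial common-tail vertices $\{c_i^\ell\mid i<k_\ell\}$, and the finitely many vertices of $Q_x$ outside $T\cup L$; each piece is finite, so $Q^0$ is finite. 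Conditions (1)--(3) then follow by construction: the cycles of $Q^0$ coincide with those of $Q_x$ and retain their unique outgoing arrows, each $Q^\ell$ is a linear infinite quiver by design, and $\a_\ell$ starts at the $Q^0$-vertex $c_{k_\ell-1}^\ell$ (which lies in $T$, hence on no oriented cycle) and terminates at the source $c_{k_\ell}^\ell$ of $Q^\ell$.

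The main obstacle is the branch-finiteness claim for each class $T_\ell$, which is what makes the clean-cut index $k_\ell$ well-defined. Each branch vertex $b\in T_\ell$ sits on a maximal path whose pre-merge initial segment is unshared with other such paths, so distinct branches yield distinct maximal paths in $Q_x$, and Proposition~\ref{pr:quiver}(3) forces finiteness. The same counting argument bounds the number of classes $n$ and the number of vertices of $Q_x$ outside $T\cup L$ (each such vertex lies on the finite initial portion of some maximal path). Once this finiteness is in hand, the remaining verifications are routine applications of the definitions.
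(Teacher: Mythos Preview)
Your approach is correct and matches the paper's: both directions are reduced to Proposition~\ref{pr:quiver} via the finitely-many-maximal-paths criterion, which is exactly what the paper does in its one-line proof (``It is straightforward to check that the conditions on $Q_x$ are equivalent to the condition on $Q(x)$ to have only finitely many maximal paths starting at $x$''). You have supplied the explicit construction and finiteness bookkeeping that the paper leaves to the reader; the only place to tighten is the existence of $k_\ell$, where besides branch vertices you should also note that vertices of $B$ have finite out-degree (Proposition~\ref{pr:quiver}(2)), so only finitely many tail vertices receive an extra incoming arrow from $B$.
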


\begin{proof}
It is straightforward to check that the conditions on $Q_x$ are
equivalent to the condition on $Q(x)$ to have only finitely many
maximal paths starting at $x$. So the assertion follows from Proposition~\ref{pr:quiver}.
\end{proof}

An immediate consequence is the following simple criterion for quivers with finitely many vertices.

\begin{cor}\label{co:quiver-finite}
  Let $Q$ be quiver having only a finite number of vertices. Then $Q$ is
  left noetherian if and only if there are only finitely many arrows
  and there is precisely one arrow starting at each vertex lying on an
  oriented cycle.
\end{cor}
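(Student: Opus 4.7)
The plan is to derive both implications directly from Proposition~\ref{pr:quiver2} applied to each path component $Q_x$, using that the finiteness of $Q_0$ rules out the infinite summands in the decomposition~\eqref{eq:decomp}.

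For the forward direction, I would fix a vertex $x$ and apply Proposition~\ref{pr:quiver2} to obtain a decomposition $Q_x\smallsetminus\{\a_1,\ldots,\a_n\}=Q^0\amalg Q^1\amalg\cdots\amalg Q^n$. Since $Q_0$ is finite, the vertex set of $Q_x$ is finite; but each $Q^i$ with $i\neq 0$ has infinitely many vertices, forcing $n=0$ and $Q_x=Q^0$. Condition~(1) of Proposition~\ref{pr:quiver2} then says that $Q_x$ has only finitely many arrows and that precisely one arrow starts at each vertex lying on an oriented cycle in $Q_x$. Since every arrow of $Q$ belongs to $Q_y$ for $y$ its source, and $Q_0$ is finite, $Q$ has only finitely many arrows. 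To transfer the cycle condition: if $v\in Q_0$ lies on an oriented cycle in $Q$, then this cycle is a path $v\to v$, hence lives in $Q_v$, so by the proposition applied to $x=v$ there is precisely one arrow starting at $v$ in $Q_v$; since any arrow with source $v$ lies in $Q_v$, the same holds in $Q$.

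For the backward direction, fix $x$ and consider the full subquiver $Q_x\subseteq Q$. Since $Q$ has finitely many vertices and arrows, so does $Q_x$. I propose to verify that the trivial decomposition $Q_x=Q^0$ (no arrows $\a_i$ and no $Q^i$ for $i\neq 0$) satisfies the hypotheses of Proposition~\ref{pr:quiver2}. Only condition~(1) requires work: if $v\in Q_x$ lies on an oriented cycle in $Q_x$, it lies on one in $Q$, so by assumption precisely one arrow of $Q$ starts at $v$; this arrow $v\to w$ forces $w\in Q_x$ (by concatenation with a path $x\to v$), hence lies in $Q_x$ by fullness. Thus $Q(x)$ is noetherian, and since $x$ was arbitrary, $Q$ is left noetherian.

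The only subtle point is the translation between the global cycle condition in the corollary and the local cycle condition stated inside Proposition~\ref{pr:quiver2}(1); the key observations are that an oriented cycle through $v$ automatically lies in $Q_v$, and that $Q_x$ is full in $Q$, so that arrows at cycle vertices neither appear nor disappear when passing between $Q$ and $Q_x$. Everything else reduces to the trivial observation that the infinite tails $Q^i$ in~\eqref{eq:decomp} cannot occur under the finiteness hypothesis on $Q_0$.
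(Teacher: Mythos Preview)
Your proof is correct and follows essentially the same approach as the paper: apply Proposition~\ref{pr:quiver2} and observe that finiteness of $Q_0$ forces $n=0$ and $Q_x=Q^0$ in the decomposition~\eqref{eq:decomp}. The paper's proof is a single sentence to this effect; your version simply spells out both directions and the passage between the global cycle condition in $Q$ and the local one in $Q_x$, which the paper leaves implicit.
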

\begin{proof}
  We apply Proposition~\ref{pr:quiver2} and note that in the
  decomposition \eqref{eq:decomp} we have $Q_x=Q^0$, since for
  $i\neq 0$ the quiver $Q^i$ has infinitely many vertices.
\end{proof}

\section{Locally noetherian categories}

Let $\A$ be an abelian category. We assume that the subobjects of any
object form a complete lattice and that for every directed set of
subobjects $(X_i)_{i\in I}$ of an object $X$ and $Y\subseteq X$ one
has
\[\tag{AB5} \left(\sum_{i\in I}X_i\right)\cap Y=\sum_{i\in I}(X_i\cap
  Y).\] A collection of objects $(G_i)_{i\in I}$
\emph{generates} $\A$ if for each non-zero morphism $\p\colon X\to Y$
in $\A$ we have $\p\pi\neq 0$ for some morphism $\pi\colon G_i\to X$.
An object of an abelian category is called \emph{noetherian} if
the ascending chain condition holds for its subobjects. We write
$\Noeth\A$ for the full subcategory of noetherian objects in $\A$. 
An abelian category is \emph{cocomplete} if for any set of objects its coproduct exist.

\begin{defn}
  The abelian category $\A$ is \emph{noetherian} if every object is
  noetherian, and $\A$ is \emph{locally noetherian} if every object is
  the directed union of its noetherian subobjects.
\end{defn}

The following shows that our definition agrees with the usual one for
Grothendieck categories \cite[II.4]{Ga1962}.

\begin{lem}\label{le:locnoeth}
The category $\A$ is locally noetherian if and only if there is
 a collection of noetherian generators.
\end{lem}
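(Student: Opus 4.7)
The plan is to argue both directions directly from the definitions, using the sum property of the subobject lattice (and in particular AB5) throughout.

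For the forward direction, assume $\A$ is locally noetherian. The natural candidate for a generating collection is the class $\Noeth\A$ itself. Given a non-zero morphism $\varphi\colon X\to Y$, write $X=\sum_{\la\in \La}X_\la$ as a directed sum of noetherian subobjects. If $\varphi\circ\iota_\la=0$ for each inclusion $\iota_\la\colon X_\la\hookrightarrow X$, then $X_\la\subseteq\Ker\varphi$ for every $\la$, hence $X=\sum_\la X_\la\subseteq\Ker\varphi$, contradicting $\varphi\neq 0$. So some $\iota_\la$ does the job.

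For the backward direction, suppose $(G_i)_{i\in I}$ is a collection of noetherian generators and let $X\in\A$ be arbitrary. First note that the noetherian subobjects of $X$ form a directed poset: the sum $Y_1+Y_2$ of two noetherian subobjects is the image of the canonical morphism $Y_1\oplus Y_2\to X$, and since $Y_1\oplus Y_2$ is noetherian (an extension of noetherian objects) so is its quotient $Y_1+Y_2$. Let $X'=\sum Y$ be the sum over all noetherian subobjects $Y\subseteq X$; I will show $X'=X$. If not, then the canonical morphism $q\colon X\to X/X'$ is non-zero, so by the generating hypothesis there exists $\pi\colon G_i\to X$ with $q\circ\pi\neq 0$. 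But the image of $\pi$ is a quotient of the noetherian object $G_i$, hence is a noetherian subobject of $X$, hence is contained in $X'$; so $q\circ\pi=0$, a contradiction.

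There is no real obstacle here once one has the ambient AB5 assumption and the observation that finite sums and quotients of noetherian objects are noetherian; the only point worth being careful about is interpreting ``directed union'' through the lattice sum and invoking AB5 (or its special case used above) to replace $X=\sum_\la X_\la$ by the statement that $\bigcap_\la(\Ker\varphi+X_\la)\cap X=\Ker\varphi$ needs nothing more than $X_\la\subseteq\Ker\varphi$ for all $\la$.
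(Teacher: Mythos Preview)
Your proof is correct and follows essentially the same approach as the paper: both directions use $\Noeth\A$ as the generating collection and, conversely, define $X'$ as the sum of all noetherian subobjects and show every $G_i\to X\to X/X'$ vanishes. You even supply two details the paper leaves implicit (directedness of the noetherian subobjects and noetherianity of $\Im\pi$); the only blemish is the final sentence, whose displayed intersection formula is garbled and unnecessary---the inclusion $\sum_\la X_\la\subseteq\Ker\varphi$ is just the universal property of the join and needs no AB5.
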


\begin{proof}
Suppose that $\A$ is locally noetherian. Then we may take the noetherian
objects as the collection of generators, because for a morphism
$\p\colon X\to Y$ with $X=\sum_i X_i$ written as  the directed union of
its noetherian subobjects, we have
\[\Hom(X,Y)\cong\lim_i \Hom(X_i,Y).\]
If $\p\neq 0$, then one of the composites $X_i\hookrightarrow X\to Y$
needs to be non-zero.

Now suppose there is a collection of noetherian generators
$(G_i)_{i\in I}$. Let $X$ be an object and $X'\subseteq X$ the
directed union of its noetherian subobjects. It is clear that every composite
$G_i\to X\twoheadrightarrow X/X'$ is zero. Thus $X'=X$.
\end{proof}  

Let $Y\in\A$ be an object, written as the directed union $Y=\sum_j
Y_j$ of a set of subobjects. Then it is easily checked that for each
noetherian object $X\in\A$ the canonical map
\begin{equation}\label{eq:hom1}
  \colim_j\Hom(X,Y_j)\lto\Hom(X,\sum_j Y_j)
\end{equation}      
is a bijection. Clearly, the map is injective, and because $X$ is
noetherian any morphism $X\to Y$ factors through the inclusion
$Y_{j_0}\to Y$ for some index $j_0$. Here one uses the
condition (AB5). Thus the map is also surjective.

\begin{lem}
  Let $X=\sum_i X_i$ and $Y=\sum_j Y_j$ be objects in a locally
  noetherian category, written as directed union of their noetherian
  subobjects. Then there is a natural isomorphism of abelian groups
  \begin{equation}\label{eq:hom2}
    \lim_i\colim_j\Hom(X_i,Y_j)\longiso\Hom(X,Y).
\end{equation}    
\end{lem}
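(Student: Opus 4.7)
The plan is to derive \eqref{eq:hom2} by applying \eqref{eq:hom1} twice, once in each variable, and use that $\Hom(-,Y)$ converts the directed union $X=\sum_i X_i$ into a limit.

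First I would record that in any abelian category satisfying (AB5), a directed union of subobjects $X=\sum_i X_i$ coincides with the filtered colimit of the system $(X_i)_{i}$ with inclusions as transition maps. Indeed, the canonical morphism $\colim_i X_i\to X$ has image $\sum_i X_i$, and it is a monomorphism because filtered colimits of monomorphisms are monomorphisms (a standard consequence of (AB5)); hence it identifies $\colim_i X_i$ with $X$. Since $\Hom(-,Y)$ sends colimits to limits, this gives a natural isomorphism
\[
\Hom(X,Y)\;\longiso\;\lim_i\,\Hom(X_i,Y).
\]

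Second, for each fixed noetherian $X_i$, the bijection \eqref{eq:hom1} already established in the paragraph preceding the lemma applies to the directed union $Y=\sum_j Y_j$ and yields a natural isomorphism
\[
\colim_j\,\Hom(X_i,Y_j)\;\longiso\;\Hom(X_i,Y).
\]
Plugging this into the previous display produces the required isomorphism
\[
\lim_i\,\colim_j\,\Hom(X_i,Y_j)\;\longiso\;\Hom(X,Y).
\]
Naturality in $X$ and $Y$ follows from the naturality of \eqref{eq:hom1} and of the limit/colimit identifications.

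I do not anticipate any serious obstacle: the argument is a clean two-step reduction, and the only point that needs verification is the compatibility of directed unions with filtered colimits, which is automatic from (AB5). The mild bookkeeping issue is that we must know $\lim_i\colim_j\Hom(X_i,Y_j)$ makes sense as an iterated limit/colimit of abelian groups, but this is immediate since $\mathbf{Ab}$ is complete and cocomplete and the indexing is by the subobject posets, with transition maps induced by the inclusions $X_i\subseteq X_{i'}$ and $Y_j\subseteq Y_{j'}$.
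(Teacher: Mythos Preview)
Your proof is correct and follows essentially the same route as the paper's: the paper's one-line proof says to combine \eqref{eq:hom1} with the fact that $\Hom(-,-)$ takes colimits in the first variable to limits, which is exactly your two-step reduction $\Hom(X,Y)\cong\lim_i\Hom(X_i,Y)\cong\lim_i\colim_j\Hom(X_i,Y_j)$. Your added justification that a directed union $\sum_i X_i$ really is the filtered colimit of the $X_i$ under (AB5) is a reasonable elaboration of what the paper leaves implicit.
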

\begin{proof}
Combine \eqref{eq:hom1} with the fact that $\Hom(-,-)$ preserves
colimits in the first argument. 
\end{proof}

Now assume that $\Noeth\A$ is essentially small. We write
$\Lex((\Noeth\A)^\op,\Ab)$ for the category of additive contravariant functors
$\Noeth\A\to\Ab$ into the category of abelian groups that are left
exact and consider the \emph{restricted Yoneda functor}
\[\A\lto \bar\A:=\Lex((\Noeth\A)^\op,\Ab),\quad X\mapsto
  h_X:=\Hom(-,X)|_{\Noeth\A}.\]
Then the following is a slight generalisation of a theorem of Gabriel \cite{Ga1962}.

\begin{prop}\label{pr:locnoeth}
  Let $\A$ be a locally noetherian abelian category. Then the category $\Lex((\Noeth\A)^\op,\Ab)$ is a locally noetherian and
  cocomplete abelian category. The restricted Yoneda functor
  $\A\to\bar\A$ is fully faithful and exact. It restricts to an
  equivalence $\Noeth\A\iso\Noeth\bar\A$, and its essential image is
  closed under subobjects.
\end{prop}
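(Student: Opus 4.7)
My plan is to exploit the identification $\bar\A = \Ind(\Noeth\A)$. A preliminary check shows that $\Noeth\A$ is closed under subobjects, extensions, and (using (AB5)) quotients in $\A$, so it is an essentially small abelian category. By standard properties of ind-completions of essentially small abelian categories, $\bar\A = \Lex((\Noeth\A)^\op, \Ab)$ is then a Grothendieck abelian cocomplete category, with filtered colimits exact and computed pointwise, whose Yoneda embedding $\Noeth\A \hookrightarrow \bar\A$ is fully faithful and exact, and in which the representables $h_W$ ($W \in \Noeth\A$) form a family of compact generators. To apply Lemma~\ref{le:locnoeth} and conclude local noetherianity, I would show each $h_W$ is noetherian in $\bar\A$ by establishing a bijection between subobjects of $h_W$ in $\bar\A$ and subobjects of $W$ in $\Noeth\A$: write a subobject $F \hookrightarrow h_W$ as $F = \colim_i h_{X_i}$ (a filtered colimit of representables), and use exactness of Yoneda on $\Noeth\A$ to identify the image of $h_{X_i} \to h_W$ as $h_{K_i}$ with $K_i \subseteq W$ the image of $X_i \to W$; the $K_i$ form a directed family of subobjects of $W$ which stabilises by noetherianity of $W$, giving $F = h_K$ for $K = \sum_i K_i$.

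Full faithfulness of $\A \to \bar\A$ then follows by combining \eqref{eq:hom2} with the identity $h_X = \colim_i h_{X_i}$ in $\bar\A$ for $X = \sum_i X_i$ a directed union of noetherian subobjects (obtained pointwise via \eqref{eq:hom1}), and applying compactness of $h_{X_i}$ in $\bar\A$ together with the Yoneda lemma to get
\[\Hom_{\bar\A}(h_X, h_Y) = \lim_i \Hom_{\bar\A}(h_{X_i}, h_Y) = \lim_i \colim_j \Hom_\A(X_i, Y_j) = \Hom_\A(X, Y).\]
For exactness, I would write any short exact sequence $0 \to X' \to X \to X'' \to 0$ in $\A$ as the filtered colimit of the noetherian short exact sequences $0 \to X' \cap X_i \to X_i \to X_i/(X' \cap X_i) \to 0$ for a noetherian filtration $X = \sum_i X_i$ (using (AB5)); exactness on $\Noeth\A$ is given by the exactness of the Yoneda embedding into $\Ind(\Noeth\A)$, and exactness is preserved under the pointwise-computed filtered colimit in $\bar\A$.

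Finally, the subobject bijection shows each $h_W$ is noetherian, so the essential image of $\Noeth\A \hookrightarrow \bar\A$ lies in $\Noeth\bar\A$. Conversely, any noetherian $F \in \bar\A$ is a quotient of a finite direct sum $\bigoplus_j h_{W_j}$ of generators, hence lies in the essential image by exactness of Yoneda on $\Noeth\A$. For closure of the essential image of $\A \to \bar\A$ under subobjects, given $F \hookrightarrow h_X$ in $\bar\A$ decompose $F = \colim_i F_i$ with $F_i$ noetherian, so $F_i = h_{V_i}$; by compactness each $F_i \to h_X$ factors through $h_{X_j}$ for some noetherian $X_j \subseteq X$, and by the subobject description of $h_{X_j}$ the image of $F_i$ in $h_X$ is $h_{K_i}$ for some $K_i \subseteq X_j \subseteq X$, yielding $F = h_K$ for $K = \sum_i K_i$. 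The technical heart of the argument is the subobject bijection for $h_W$, which underpins both the local noetherian property and the closure of the essential image.
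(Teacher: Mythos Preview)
Your proposal is correct and, for the part the paper actually proves in detail (full faithfulness), your computation
\[\Hom_{\bar\A}(h_X, h_Y) \cong \lim_i \colim_j \Hom_\A(X_i, Y_j) \cong \Hom_\A(X, Y)\]
is identical to the paper's chain of isomorphisms. The paper then simply cites Gabriel \cite[II.4]{Ga1962} for exactness, the equivalence $\Noeth\A\iso\Noeth\bar\A$, and closure under subobjects, whereas you spell these out via the $\Ind(\Noeth\A)$ framework and your subobject bijection for representables; this is the natural way to unpack the Gabriel reference, so the approaches are essentially the same.

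Two minor remarks: essential smallness of $\Noeth\A$ is a standing assumption made just before the proposition, not something you derive from the closure properties; and closure of $\Noeth\A$ under quotients does not require (AB5).
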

\begin{proof}
  Let $X=\sum_i X_i$ and $Y=\sum_j Y_j$ be objects in $\A$, written as
  directed union of their noetherian subobjects.  Using
  \eqref{eq:hom1} we have
  \[h_X\cong\colim_i\Hom(-,X_i)\]
and compute
\begin{align*}
  \Hom(h_X,h_Y)&\cong\Hom(\colim_i\Hom(-,X_i),\colim_j\Hom(-,Y_j))\\
               &\cong\lim_i\Hom(\Hom(-,X_i),\colim_j\Hom(-,Y_j))\\
               &\cong  \lim_i\colim_j\Hom(X_i,Y_j)\\
               &\cong\Hom(X,Y).
\end{align*}
The second isomorphism uses that $\Hom(-,-)$ preserves colimits
in the first argument, the third follows from Yoneda's lemma, and the
last is \eqref{eq:hom2}. Thus $\A\to\bar\A$ is fully faithful.  For
the rest of the assertion, see \cite[II.4]{Ga1962}.
\end{proof}

\begin{exm}\label{ex:ring}
 Let $A$ be a ring. Then  the
regular module $A$ is a finitely generated generator for the category $\Mod A$ of left $A$-modules.
Thus $\Mod A$ is locally noetherian if and only if $A$ is left
noetherian \cite[V.4]{St1975}.
\end{exm}

\section{Representations of noetherian quivers}

Let $Q$ be a quiver and $\Q$ its path category. A \emph{representation} of
$Q$ in a category $\A$ is by definition a functor $\Q\to
\A$. Morphisms between representations are the natural
transformations, and we write $\Rep(Q,\A)$ for the category of representations.

Let $\A$ be a cocomplete abelian category. We assume that the
subobjects of any object form a complete lattice and that the
condition (AB5) holds. Given an object $M\in\A$, the functor
$\Hom(M,-)\colon\A\to\Set$ admits a left adjoint, which takes a set $X$
to the coproduct $M[X]$ of copies of $M$
indexed by the elements of $X$. Thus for each object $N\in\A$ there is
a natural isomorphism
\[\Hom(M[X],N)\cong\Hom(X,\Hom(M,N)).\]

Given a vertex  $x\in Q_0$, we write $\one_x$ for the subquiver of $Q$
consisting of the single vertex $x$ and no arrows.. Restriction along the inclusion $\one_x\hookrightarrow Q$
yields the evaluation functor
\[\Rep(Q,\A)\lto\Rep(\one_x,\A)=\A,\quad M\mapsto  M(x).\]
The left Kan extension provides a left adjoint, which is given by
\[\A\lto \Rep(Q,\A),\quad N\mapsto  N[Q(x,-)].\]
Thus we have for objects $N\in\A$ and $M\in \Rep(Q,\A)$ a natural
isomorphism
\begin{equation}\label{eq:adj}
\Hom(N[Q(x,-)],M)\cong\Hom(N,M(x)).
\end{equation}

\begin{lem}\label{le:gen}
Let $(M_i)_{i\in I}$ be a collection of generators of $\A$. Then the representations
$M_i[Q(x,-)]$ with $i\in I$ and $x\in Q_0$ generate $\Rep(Q,\A)$.
\end{lem}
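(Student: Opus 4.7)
The plan is to reduce the generating property for $\Rep(Q,\A)$ to the generating property for $\A$ via the adjunction \eqref{eq:adj}. Suppose $\varphi\colon M\to M'$ is a non-zero morphism in $\Rep(Q,\A)$. Since a natural transformation is zero iff all its components vanish, there exists a vertex $x\in Q_0$ with $\varphi(x)\colon M(x)\to M'(x)$ non-zero in $\A$. Using that $(M_i)_{i\in I}$ generates $\A$, I may choose $i\in I$ and a morphism $\pi\colon M_i\to M(x)$ in $\A$ such that $\varphi(x)\circ\pi\neq 0$.

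The next step is to lift $\pi$ to a morphism between representations. By the adjunction \eqref{eq:adj}, the morphism $\pi$ corresponds to a morphism $\tilde\pi\colon M_i[Q(x,-)]\to M$ in $\Rep(Q,\A)$. Unwinding the adjunction, $\tilde\pi(x)\colon M_i[Q(x,x)]\to M(x)$ restricted to the summand indexed by the trivial path $e_x\in Q(x,x)$ recovers $\pi$. Therefore the composite $\varphi\circ\tilde\pi\colon M_i[Q(x,-)]\to M'$ evaluated at $x$ and restricted to the $e_x$-summand is precisely $\varphi(x)\circ\pi$, which is non-zero by choice of $\pi$. Hence $\varphi\circ\tilde\pi\neq 0$, which shows that the family $\{M_i[Q(x,-)]\mid i\in I,\ x\in Q_0\}$ generates $\Rep(Q,\A)$.

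There is no real obstacle here: the argument is a direct verification that reduces the statement to the defining property of the generators $(M_i)_{i\in I}$ in $\A$ together with the adjunction \eqref{eq:adj}. The only point needing care is the explicit identification of the component of $\tilde\pi$ at $x$ on the summand corresponding to the trivial path, which is what makes the counit of the adjunction behave as a ``recovery'' map.
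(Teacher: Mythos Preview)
Your proof is correct and follows exactly the approach indicated by the paper, which simply states that the result follows from the adjunction isomorphism \eqref{eq:adj}. You have spelled out the details that the paper leaves implicit, in particular the identification of the counit via the trivial-path summand, but the underlying idea is the same.
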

\begin{proof}
This follows from the adjunction isomorphism \eqref{eq:adj}.
\end{proof}

An object $X$ is \emph{finitely generated} if for any directed set
of subobjects $(X_i)_{i\in I}$ with $X=\sum_{i\in I}X_i$ there is
$i_0\in I$ such that $X=X_{i_0}$.

\begin{lem}\label{le:fg}
If $M\in\A$ is finitely generated, then $M[Q(x,-)]$ is finitely
generated in $\Rep(Q,\A)$ for each $x\in Q_0$.
\end{lem}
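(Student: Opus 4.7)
The plan is to transfer the problem from $\Rep(Q,\A)$ to $\A$ via the adjunction \eqref{eq:adj}. Suppose that $M[Q(x,-)] = \sum_{i\in I} N_i$ as a directed union of subobjects in $\Rep(Q,\A)$. Since $\Rep(Q,\A)$ is a functor category with target an abelian category satisfying (AB5), kernels, images, and directed colimits are computed pointwise, so subobjects and their directed sums evaluate pointwise. Evaluating at $x$ therefore yields $M[Q(x,-)](x) = \sum_{i\in I} N_i(x)$ as a directed union in $\A$.

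The identity on $M[Q(x,-)]$ corresponds under \eqref{eq:adj} to the canonical morphism $\eta\colon M\to M[Q(x,-)](x)$ which includes $M$ as the summand indexed by the trivial path $x\to x$. I would first record the auxiliary fact that a quotient of a finitely generated object is again finitely generated: given an epimorphism $p\colon M\twoheadrightarrow M'$ and a directed union $M' = \sum_j M'_j$ of subobjects, the pullbacks $M_j := p^{-1}(M'_j)\subseteq M$ form a directed family, all contain $\ker p$, and satisfy $\sum_j M_j = M$; finite generation of $M$ then forces $M'_{j_0} = M'$. In particular, $\eta(M)$ is finitely generated.

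Now I apply (AB5) inside $M[Q(x,-)](x)$:
\[\eta(M) \;=\; \eta(M)\cap \sum_i N_i(x) \;=\; \sum_i\bigl(\eta(M)\cap N_i(x)\bigr).\]
Since $\eta(M)$ is finitely generated, there exists $i_0$ with $\eta(M)\cap N_{i_0}(x) = \eta(M)$, so $\eta$ factors as $M\to N_{i_0}(x)\hookrightarrow M[Q(x,-)](x)$. Transporting back through \eqref{eq:adj} and using naturality of the adjunction isomorphism, the identity of $M[Q(x,-)]$ factors as $M[Q(x,-)]\to N_{i_0}\hookrightarrow M[Q(x,-)]$; since the second arrow is a monomorphism, $N_{i_0} = M[Q(x,-)]$, as required.

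The only delicate point is the bookkeeping of directed unions of subobjects under evaluation and intersection, which is where (AB5) intervenes both in $\A$ and pointwise in $\Rep(Q,\A)$. Once this is in hand, the argument is essentially the statement that finite generation transfers across the adjunction $(-)[Q(x,-)]\dashv \text{ev}_x$ when its right adjoint $\text{ev}_x$ preserves directed unions of subobjects, which is precisely what (AB5) gives.
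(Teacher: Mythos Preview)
Your proof is correct and follows essentially the same route as the paper: use the adjunction \eqref{eq:adj} to convert the identity of $M[Q(x,-)]$ into a morphism $M\to\sum_i N_i(x)$, use finite generation of $M$ in $\A$ to factor it through some $N_{i_0}(x)$, and transport back. The paper compresses the middle step into the single clause ``factors through $N_{i_0}(x)$ since $M$ is finitely generated,'' whereas you unpack it via the image $\eta(M)$ and an explicit appeal to (AB5); this extra care is fine but not strictly necessary, since $\eta$ is already the split inclusion of the summand indexed by the trivial path, so $\eta(M)\cong M$ directly.
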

\begin{proof}
Let  $M[Q(x,-)]=\sum_{i\in I}N_i$ be a directed union
of subobjects $N_i$. Then the adjunction isomorphism
\eqref{eq:adj} yields a morphism $M\to \sum_{i\in I}N_i(x)$, which
factors through $N_{i_0}(x)\to \sum_{i\in I}N_i(x)$ for some $i_0\in
I$ since $M$ is
finitely generated. Thus the identity  $M[Q(x,-)]\to\sum_{i\in I}N_i$
factors through $N_{i_0}\to\sum_{i\in I}N_i$, so  $M[Q(x,-)]=N_{i_0}$.
\end{proof}

Let $Q$ be a left noetherian quiver and fix $x\in Q_0$. We choose a
total order $\preceq$ on the countable set of arrows in $Q$ that lie on
a path in $Q(x)$. This order extends lexicographically to a total
order $\preceq$ on $Q(x)$ such that $\p <\psi$ implies
$\p\prec \psi$. Note that $(Q(x),\preceq)$ is noetherian, because any
ideal with respect to $\preceq$ is also one with respect to $\le$.

\begin{lem}\label{le:free}
  Let $Q$ be a left noetherian quiver and $x$ a vertex. If $M\in\A$ is a
  noetherian object, then the representation $M[Q(x,-)]$ is
  noetherian.
\end{lem}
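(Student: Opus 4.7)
The plan is to imitate the Gröbner basis proof of Hilbert's basis theorem, using the order $\preceq$ on $Q(x)$ chosen in the paragraph preceding the lemma: it is both noetherian (when viewed as a poset) and compatible with left multiplication by paths, which is exactly the structure needed.

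For each path $\psi\in Q(x)$ with target $y$, consider the filtration subobject
\[F_\psi := \coprod_{\psi'\in Q(x,y),\,\psi'\preceq\psi} M_{\psi'}\;\subseteq\; M[Q(x,y)] = \coprod_{\psi'\in Q(x,y)} M_{\psi'}\]
and the projection $\pi_\psi\colon M[Q(x,y)]\twoheadrightarrow M_\psi\cong M$ onto the $\psi$-summand. To a subobject $N\subseteq M[Q(x,-)]$ I associate the family of \emph{leading coefficients}
\[L_\psi(N) := \pi_\psi\bigl(N(y)\cap F_\psi\bigr)\;\subseteq\; M.\]
The left compatibility of $\preceq$ (so $\psi''\preceq\psi'$ implies $\chi\psi''\preceq\chi\psi'$) shows that whenever $\psi=\chi\psi'$ the transition map $M[Q(x,t(\psi'))]\to M[Q(x,y)]$ carries $F_{\psi'}$ into $F_\psi$ and acts as the identity on $M_{\psi'}\cong M\cong M_\psi$. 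Hence $L_{\psi'}(N)\subseteq L_\psi(N)$ whenever $\psi\le\psi'$ in the poset $(Q(x),\le)$.

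Given an ascending chain $N_0\subseteq N_1\subseteq\cdots$, the plan is to establish two claims and combine them. Claim (A) (\emph{reconstruction}): if $N\subseteq N'\subseteq M[Q(x,-)]$ satisfy $L_\psi(N)=L_\psi(N')$ for every $\psi\in Q(x)$, then $N=N'$. Claim (B) (\emph{uniform stabilization}): there exists $k_0\in\bbN$ with $L_\psi(N_k)=L_\psi(N_{k_0})$ for every $\psi$ and every $k\ge k_0$. Applying (B) and then (A) to the pair $N_{k_0}\subseteq N_k$ yields $N_k=N_{k_0}$ for $k\ge k_0$. For (A), the argument is a Buchberger-style reduction along $\preceq$: equality of leading coefficients lets one cancel the leading term of any excess element in $N'$ against one from $N$, strictly decreasing its leading path, and well-foundedness of $(Q(x),\preceq)$ terminates the process. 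For (B), noetherianity of $M$ stabilizes each individual chain $\bigl(L_\psi(N_k)\bigr)_k$; uniformity in $\psi$ is obtained by observing that for every subobject $L\subseteq M$ the set $\{\psi : L\subseteq L_\psi(N_k)\}$ is an ascending sequence of ideals in $(Q(x),\le)$ and thus stabilizes by left noetherianity of $Q$, while only finitely many such $L$ (those generating the stabilizations $\bigcup_k L_\psi(N_k)$) actually need tracking.

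The main obstacle is claim (A): the Buchberger reduction is naturally phrased using elements, but the statement must hold in an arbitrary locally noetherian abelian category $\A$ where elements are unavailable. My approach would be to replace the elementwise cancellation by a subobject-lattice manipulation, using the pullback squares $N\cap F_\psi\hookrightarrow N$, the projections $\pi_\psi$, and the compatibility of the filtration $F_\psi$ with the representation structure, and to perform a well-founded descent along $\preceq$ showing that the vanishing of all leading coefficients of $N'/N$ forces $N'/N=0$.
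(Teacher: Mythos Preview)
Your filtration runs in the wrong direction, and this makes Claim~(A) outright false. The order $\preceq$ constructed just before the lemma satisfies the \emph{ascending} chain condition (every nonempty subset of $Q(x,y)$ has a $\preceq$-maximum), not the descending one: already for the Jordan quiver one has $1\succ\alpha\succ\alpha^2\succ\cdots$, so $(Q(x),\preceq)$ is not well-founded and your Buchberger reduction need not terminate. Worse, with $F_\psi=\coprod_{\psi'\preceq\psi}M_{\psi'}$ the reconstruction claim itself fails. Take $Q$ the Jordan quiver, $M=k$ a field, $N=(X-1)k[X]\subsetneq k[X]=N'$. Then $F_{\alpha^n}=X^n k[X]$, and since $(X-1)k[X]\cap X^nk[X]=(X-1)X^nk[X]$ the $X^n$-coefficient of a typical element $(X-1)X^n q$ is $-q(0)$; hence $L_{\alpha^n}(N)=k=L_{\alpha^n}(N')$ for every $n$ although $N\neq N'$.

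The paper uses the opposite filtration: it sets $Q(x,y)_\phi=\{\psi:\phi\preceq\psi\}$ and $U_\phi=\pi_\phi\bigl(U(y)\cap M[Q(x,y)_\phi]\bigr)$, which in the Jordan case recovers the usual top-degree leading coefficient. The analogue of your Claim~(A) is then proved entirely in the subobject lattice, with no reduction step: one uses (AB5) to write $U(y)$ as the directed union of the $U(y)\cap M[Q(x,y)_\phi]$, picks a $\preceq$-\emph{maximal} $\phi$ at which $U$ and $V$ still differ (this uses ACC, not DCC), and reads off $U_\phi\neq V_\phi$ from the exact sequence $0\to\sum_{\phi\prec\psi}M[Q(x,y)_\psi]\to M[Q(x,y)_\phi]\to M\to 0$. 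For Claim~(B) the paper bypasses your ``only finitely many $L$ need tracking'' (which is not justified as stated) by invoking Lemma~\ref{le:poset}: from the sequence $(\phi_i)$ one extracts a subsequence that is decreasing for $\le$, and the compatibility $\phi\le\psi\Rightarrow U_\psi\subseteq U_\phi$ then yields a strictly increasing chain inside $M$, contradicting its noetherianity.
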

\begin{proof}
Let $U\subseteq M[Q(x,-)]$ be a subrepresentation and $\p\in Q(x,y)$ a
path. We set \[Q(x,y)_\p:=\{\psi\in Q(x,y)\mid \p\preceq \psi\}\] and write
$U_\p$ for the image of the composite
\[U(y)\cap M[Q(x,y)_\p]\hookrightarrow M[Q(x,y)_\p]\twoheadrightarrow M,\] where
the second map denotes the projection corresponding to $\p$. For
$\psi\in Q(x,y)$, we have
\begin{equation}\label{eq:compos}
  \p \leq \psi\quad\implies\quad U_\psi\subseteq U_\p.
\end{equation}
More precisely, if there is a
path $\chi$ such that $\p=\chi\psi$, then composition with $\chi$
induces a map
\[Q(x,y)_\p\lto Q(x,y)_\psi\]
such that $\p$ is the only element that is mapped to $\psi$.
Thus $U(\chi)$ maps $U_\psi$ into $U_\p$.

Given subrepresentations $U\subseteq V\subseteq M[Q(x,-)]$, we have
$U_\p\subseteq V_\p$ for all $\p\in Q(x)$. If $U\neq V$, we claim
there exists $\p\in Q(x)$ such that $U_\p\neq V_\p$. To see this,
choose $y\in Q_0$ such that $U(y)\neq V(y)$.  Observe that the union
\begin{equation}\label{eq:dir1}
  Q(x,y)=\bigcup_{\p\in Q(x,y)}Q(x,y)_\p
\end{equation}
is directed, and therefore condition (AB5) implies
\[U(y)=\sum_{\p\in Q(x,y)}U(y)\cap M[Q(x,y)_\p].\] Thus there exists 
$\p\in Q(x,y)$ with
\[U(y)\cap M[Q(x,y)_\p]\neq V(y)\cap M[Q(x,y)_\p].\] Using that $Q(x)$
is noetherian we may choose $\p$ maximal with respect to
$\preceq$. Next observe that
\begin{equation}\label{eq:dir2}
  Q(x,y)_\p\smallsetminus\{\p\}=\bigcup_{\p\prec\psi}Q(x,y)_\psi
\end{equation}
is a directed union. This yields the following commutative diagram
with exact rows, using again the condition (AB5):
\[\begin{tikzcd}
    0\arrow{r}&\sum_{\p\prec \psi}U(y) \cap
    M[Q(x,y)_\psi]\arrow{r}\arrow[hookrightarrow]{d}&
    U(y)\cap M[Q(x,y)_\p]\arrow{r}\arrow[hookrightarrow]{d}&U_\p\arrow{r}\arrow[hookrightarrow]{d}&0 \\
    0\arrow{r}&\sum_{\p\prec\psi}M[Q(x,y)_\psi]\arrow{r}&M[Q(x,y)_\p]\arrow{r}&M\arrow{r}&0
  \end{tikzcd}\]
For $\p\prec\psi$ the maximality of $\p$ implies
\[U(y)\cap M[Q(x,y)_\psi]= V(y)\cap M[Q(x,y)_\psi]\] 
and therefore $U_\p\neq V_\p$.

Now suppose there is a proper ascending chain
\[U^1\subseteq U^2 \subseteq U^3\subseteq \cdots\] of
subrepresentations of $M[Q(x,-)]$. Pick for each $i\in\bbN$ a path
$\p_i$ such that $U^i_{\p_i}\subsetneq U^{i+1}_{\p_i}$. The set
$\{\p_i\mid i\in\bbN\}$ is infinite since $M$ is noetherian. So we may
assume $\p_i\neq \p_j $ for all $i\neq j$ in $\bbN$. Because  $Q(x)$ is
noetherian there exists a map $\nu\colon\bbN\to \bbN$ such that
$i<j$ implies $\nu(i)<\nu(j)$ and $\p_{\nu(j)}\leq \p_{\nu(i)}$; see
Lemma~\ref{le:poset}. Thus for all $i\in\bbN$ we have
\[U^{\nu(i)}_{\p_{\nu(i)}}\subsetneq U^{\nu(i)+1}_{\p_{\nu(i)}}
  \subseteq U^{\nu(i+1)}_{\p_{\nu(i)}}\subseteq
  U^{\nu(i+1)}_{\p_{\nu(i+1)}}\] where the last inclusion follows
from \eqref{eq:compos}.  This infinite chain is impossible since $M$
is noetherian, and therefore the representation $M[Q(x,-)]$ is
noetherian.
\end{proof}

Let $x\in Q_0$ and $A$ a ring. For any ideal $I\subseteq Q(x)$ and $y\in Q_0$ set
$I(x,y)=I\cap Q(x,y)$. Then $A[I(x,-)]$ yields a subrepresentation
of $A[Q(x,-)]$ in $\Rep(Q,\Mod A)$.

\begin{lem}\label{le:ring}
  Let $x\in Q_0$. The assignment $I\mapsto A[I(x,-)]$ induces an
  embedding of posets from the ideals of $Q(x)$ into the
  subrepresentations of $A[Q(x,-)]$.
\end{lem}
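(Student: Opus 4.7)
The plan is to verify three things: (a) that $A[I(x,-)]$ really is a subrepresentation, (b) that $I \subseteq J$ implies $A[I(x,-)] \subseteq A[J(x,-)]$, and (c) that the assignment is injective and reflects inclusions.

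For (a), I would unpack the action of an arrow $\a\colon y\to z$ on $A[Q(x,-)]$: it is the $A$-linear map $A[Q(x,y)]\to A[Q(x,z)]$ induced by the map of sets $Q(x,y)\to Q(x,z)$, $\psi\mapsto\a\psi$. Using the remark just before Definition~\ref{de:noeth} that a subset of $Q(x)$ is a poset ideal iff it is a left ideal of paths, $I(x,y)\subseteq I$ and $\psi\in I(x,y)$ force $\a\psi\in I\cap Q(x,z)=I(x,z)$. Thus the restriction sends $A[I(x,y)]$ into $A[I(x,z)]$, and the inclusions $A[I(x,y)]\hookrightarrow A[Q(x,y)]$ assemble into a natural transformation, exhibiting $A[I(x,-)]$ as a subrepresentation.

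Part (b) is purely formal, since $A[-]$ is functorial on inclusions of sets. For part (c), suppose $A[I(x,-)]\subseteq A[J(x,-)]$ as subrepresentations of $A[Q(x,-)]$. Evaluating at any vertex $y$, we get an inclusion of $A$-submodules $A[I(x,y)]\subseteq A[J(x,y)]\subseteq A[Q(x,y)]$. Each of these modules is free on the corresponding set of paths, and the family $\{e_\p\mid \p\in Q(x,y)\}$ is an $A$-basis of $A[Q(x,y)]$. Hence for any $\p\in I(x,y)$ the basis vector $e_\p$ lies in the span of $\{e_\psi\mid\psi\in J(x,y)\}$, forcing $\p\in J(x,y)$ by linear independence. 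This holds for all $y$, so $I\subseteq J$, giving both order-reflection and (taking $I\subseteq J$ and $J\subseteq I$) injectivity.

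There is essentially no obstacle; the only point that demands care is the equivalence between the poset-ideal condition and the left-ideal-of-paths condition used in (a), which is already recorded in the text. Everything else reduces to the elementary observation that free modules on sets distinguish their generating sets.
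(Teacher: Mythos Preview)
Your proposal is correct and follows essentially the same approach as the paper, which simply observes that an inclusion of sets $X\subseteq Y$ induces an inclusion $A[X]\subseteq A[Y]$ of $A$-modules, with equality only if $X=Y$. Your argument is more detailed---in particular you explicitly verify that $A[I(x,-)]$ is a subrepresentation (a fact the paper records just before the lemma rather than inside the proof)---but the underlying idea is identical.
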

\begin{proof}
  This is clear since any inclusion of sets $X\subseteq Y$ induces an
  inclusion $A[X]\subseteq A[Y]$ of $A$-modules, with equality
  $A[X]=A[Y]$ only if $X=Y$.
\end{proof}

We are ready to prove our main result.

\begin{proof}[Proof of Theorem~\ref{th:repnoeth}]
  (1) $\Rightarrow$ (2): Let $M\in\Rep(Q,\A)$. We need to show that
  $M$ is the directed union of its noetherian subobjects. We may
  assume that $\A$ is essentially small, because it suffices to
  consider the full subcategory of $\A$ that is obtained by closing
  the set of objects $M(x)$ with $x\in Q_0$ under finite coproducts,
  subobjects, and quotients. Let $\bar\A$ denote the completion of
  $\A$, as in Proposition~\ref{pr:locnoeth}. The representations
  $N[Q(x,-)]$ with $N\in\Noeth\bar\A$ and $x\in Q_0$ are noetherian
  and generate $\Rep(Q,\bar\A)$ by Lemmas~\ref{le:gen} and
  \ref{le:free}. Thus $\Rep(Q,\bar\A)$ is locally noetherian; see
  Lemma~\ref{le:locnoeth}.  It follows that $M$ is the directed union
  of its noetherian subobjects, because we can view $\Rep(Q,\A)$ as a
  full subcategory which is closed under subobjects.
  
(2) $\Rightarrow$ (3): This is clear since  $\Mod A$ is locally noetherian for every 
left noetherian ring $A$; see Example~\ref{ex:ring}.

(3) $\Rightarrow$ (1): Fix $x\in Q_0$. The regular module $A$ is a finitely generated
object in $\Mod A$, and therefore the representation
$A[Q(x,-)]$ is finitely generated in $\Rep(Q,\Mod A)$ by
Lemma~\ref{le:fg}. It follows that $A[Q(x,-)]$ is noetherian
since $\Rep(Q,\Mod A)$ is locally noetherian, and we conclude from
Lemma~\ref{le:ring} that the poset $Q(x)$ is noetherian.
\end{proof}

\section{Modules over path algebras}

Let $Q$ be a quiver with a finite set of vertices and $A$ a ring.  The
\emph{path algebra} is by definition the
$A$-module \[A[Q]:=\coprod_{x,y\in Q_0}A[Q(x,y)]\] with multiplication
induced by the composition of paths in $Q$.  From the isomorphism
\eqref{eq:adj} it follows that $A[Q]$ identifies with the endomorphism
ring of the representation $\coprod_{x\in Q_0}A[Q(x,-)]$, and that the
assignment $M\mapsto \coprod_{x\in Q_0}M(x)$ induces an equivalence
\begin{equation}\label{eq:pathalg}
  \Rep(Q,\Mod A)\longiso\Mod A[Q].
\end{equation}

The following is well known, at least for path algebras over a field \cite[§1]{Cr1992}.

\begin{cor}\label{co:algebra}
  Let $Q$ be a quiver with a finite set of vertices and $A$ a left
  noetherian ring. Then the path algebra $A[Q]$ is left noetherian if
  and only if $Q$ has only finitely many arrows and there is
  precisely one arrow starting at each vertex lying on an oriented
  cycle.
\end{cor}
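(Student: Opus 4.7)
The plan is to string together three results already in the paper: the equivalence \eqref{eq:pathalg}, the general Theorem~\ref{th:repnoeth}, and the finite-vertex criterion Corollary~\ref{co:quiver-finite}. Since $Q_0$ is finite, \eqref{eq:pathalg} gives an equivalence $\Rep(Q,\Mod A)\iso\Mod A[Q]$, so local noetherianity transports between these two categories.

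First I would observe that by Example~\ref{ex:ring}, the algebra $A[Q]$ is left noetherian if and only if $\Mod A[Q]$ is locally noetherian. Via the equivalence \eqref{eq:pathalg}, this is equivalent to $\Rep(Q,\Mod A)$ being locally noetherian. Since $A$ is assumed left noetherian, $\Mod A$ itself is locally noetherian (again Example~\ref{ex:ring}), so we are in the setting of Theorem~\ref{th:repnoeth}: condition~(3) of that theorem holds precisely when the quiver $Q$ is left noetherian, i.e.\ condition~(1).

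Finally, since $Q$ has only finitely many vertices, Corollary~\ref{co:quiver-finite} translates left noetherianity of $Q$ into the desired combinatorial condition: $Q$ has only finitely many arrows, and there is precisely one arrow starting at each vertex lying on an oriented cycle.

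There is no real obstacle here; the statement is essentially a packaging of the main theorem into the language of path algebras. The only small point worth noting is that the reduction to a module category requires the finiteness of $Q_0$ (so that the coproduct $\coprod_{x\in Q_0}A[Q(x,-)]$ is a compact projective generator whose endomorphism ring is $A[Q]$); without that, the equivalence \eqref{eq:pathalg} would fail and one would have to speak of representations rather than modules.
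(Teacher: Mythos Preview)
Your argument is correct and follows the same route as the paper's proof, which simply cites Theorem~\ref{th:repnoeth}, Corollary~\ref{co:quiver-finite}, and the equivalence~\eqref{eq:pathalg}. You have merely made explicit the use of Example~\ref{ex:ring} to pass between left noetherianity of the ring $A[Q]$ and local noetherianity of $\Mod A[Q]$, which the paper leaves implicit.
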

\begin{proof}
  The assertion follows from Theorem~\ref{th:repnoeth} and
  Corollary~\ref{co:quiver-finite}, because of the
  equivalence \eqref{eq:pathalg}.
\end{proof}

\section{Locally finite representations}

Let $\A$ be an abelian category. We assume that the
subobjects of any object form a complete lattice and that the
condition (AB5) holds. Recall that an object is of \emph{finite
  length} it it admits a finite composition series. An equivalent
condition is that both chain conditions hold for subobjects.

\begin{defn}
  The abelian category $\A$ is a \emph{length category} if every object is
  of finite length, and $\A$ is \emph{locally finite} if every object is
  the directed union of its finite length subobjects.
\end{defn}

\begin{defn}
  A quiver is called \emph{left finite} if for each vertex $x$ the set
  of paths starting at $x$ is finite. A quiver is \emph{right finite}
  if for each vertex $x$ the set of paths terminating at $x$ is
  finite.
\end{defn}

Using Proposition~\ref{pr:quiver} it is easily checked that $Q(x)$ is
left finite if and only if both chain conditions hold for ideals of
$Q(x)$. This observation suggests the following analogue of
Theorem~\ref{th:repnoeth}.

\begin{thm}\label{th:locfinite}
 For a quiver $Q$ and a left artinian ring $A$ the following conditions are equivalent.
\begin{enumerate}
\item The quiver $Q$ is left finite.
\item The abelian category $\Rep(Q,\A)$ is locally finite for every 
  locally finite abelian category $\A$.
\item The abelian category $\Rep(Q,\Mod A)$ is locally finite. 
\end{enumerate}
\end{thm}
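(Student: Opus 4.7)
The plan is to parallel the proof of Theorem~\ref{th:repnoeth}, replacing ``noetherian'' by ``finite length'' throughout. The crucial new observation is that when $Q$ is left finite, the set $Q(x,y)$ is finite for each $y$ and is nonempty for only finitely many $y$; consequently, if $N$ is of finite length in $\A$, then the representation $N[Q(x,-)]$ is supported on finitely many vertices and takes as its value at each of them a finite coproduct of copies of $N$. A chain of subrepresentations then produces vertex-by-vertex chains in $\A$ of bounded total length, so $N[Q(x,-)]$ is of finite length in $\Rep(Q,\A)$.

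For (1) $\Rightarrow$ (2) I would first reduce, as in the proof of Theorem~\ref{th:repnoeth}, to the case that $\A$ is essentially small, and then pass to the completion $\bar\A=\Lex((\Noeth\A)^\op,\Ab)$. Since $\A$ is locally finite, its noetherian objects are precisely its finite length objects (a noetherian object written as a directed union of finite length subobjects must equal one of them), and by Proposition~\ref{pr:locnoeth}, together with the closure of the image of $\A$ in $\bar\A$ under subobjects, these objects remain of finite length in $\bar\A$. Hence $\bar\A$ is itself locally finite. By Lemma~\ref{le:gen} the representations $N[Q(x,-)]$ with $N\in\Noeth\bar\A$ generate $\Rep(Q,\bar\A)$, and each such generator is of finite length by the key observation above. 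It then remains to invoke an analogue of Lemma~\ref{le:locnoeth}: a cocomplete (AB5) abelian category with a set of finite length generators is locally finite, since every object is the directed union of images of maps from finite coproducts of generators, and each such image is of finite length. Hence $\Rep(Q,\bar\A)$ is locally finite, and since $\Rep(Q,\A)\hookrightarrow\Rep(Q,\bar\A)$ is closed under subobjects, $\Rep(Q,\A)$ inherits local finiteness.

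The remaining implications follow the template of the original proof. For (2) $\Rightarrow$ (3) one observes that $\Mod A$ is locally finite when $A$ is left artinian, since every finitely generated $A$-module is of finite length and every module is the directed union of its finitely generated submodules. For (3) $\Rightarrow$ (1), fix $x\in Q_0$: the regular module $A$ is finitely generated in $\Mod A$, so $A[Q(x,-)]$ is finitely generated in $\Rep(Q,\Mod A)$ by Lemma~\ref{le:fg}, and in the locally finite category $\Rep(Q,\Mod A)$ a finitely generated object is of finite length (it must coincide with one of the finite length subobjects in the directed union expression). Therefore, by Lemma~\ref{le:ring}, the poset of ideals of $Q(x)$ satisfies both chain conditions, which by the remark preceding the theorem is equivalent to $Q(x)$ being finite.

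The step requiring the most care is verifying that $\bar\A$ inherits local finiteness, i.e.\ that finite length objects of $\A$ do not acquire new subobjects after embedding into $\bar\A$; this is exactly the closure under subobjects recorded in Proposition~\ref{pr:locnoeth}. A secondary technical point is the lemma that finite length generators suffice for local finiteness in a cocomplete (AB5) abelian category, which is the natural analogue of Lemma~\ref{le:locnoeth} and deserves to be stated separately for clarity.
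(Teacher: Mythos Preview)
Your proposal is correct and follows the same template as the paper's (very brief) proof, which simply says to adapt the argument for Theorem~\ref{th:repnoeth} after noting that a ring is left artinian iff its module category is locally finite and that $Q$ is left finite iff both chain conditions hold for ideals of each $Q(x)$. One minor simplification worth noting: since each $Q(x)$ is finite, the representation $N[Q(x,-)]$ needs only finite coproducts and therefore already exists in $\Rep(Q,\A)$, so the detour through the completion $\bar\A$ (and the accompanying verification that $\bar\A$ remains locally finite) is not actually required in this case.
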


\begin{proof}
  Recall that a ring is left artinian if and only if its category of
  left modules is locally finite \cite[VIII.1]{St1975}. On the other
  hand, the quiver $Q$ is left finite if and only if both chain conditions
  hold for the ideals of $Q(x)$ for all vertices $x$. Thus we can
  adapt the proof of Theorem~\ref{th:repnoeth}.
\end{proof}

\section{Gröbner enrichments}

Let $\C$ be a \emph{small category}. Thus the objects in $\C$ form a
set. For a pair of objects $x,y$ let
$\C(x,y)$ denote the set of morphisms $x\to y$ and let $\C(x)$ denote
the set of all morphisms with domain $x$. A partial order $\preceq$ on
$\C(x)$ is called a \emph{Gröbner enrichment} if the following holds
for each $y\in\C$:
\begin{enumerate}
  \item[(G1)] $\p\prec \psi$ implies $\omega\p\prec \omega\psi$ for all $\p,\psi\in 
    \C(x,y)$ and $\omega\in \C(y)$.
\item[(G2)] The poset $\C(x,y)$ is totally ordered and every
  non-empty subset has a maximal element.
\end{enumerate}

For any abelian category $\A$ let $\Rep(\C,\A)$ denote the category of
functors $\C\to\A$. The following result generalises Theorem~\ref{th:repnoeth}.

\begin{thm}\label{th:repnoeth-general}
  Let $\C$ be a small category and $A$ a left noetherian ring. Suppose
  there exists a Gröbner enrichment for each object in $\C$. Then the
  following conditions are equivalent.
\begin{enumerate}
\item The left ideals of morphisms starting at any fixed object satisfy the ascending chain condition.
\item The abelian category $\Rep(\C,\A)$ is locally noetherian for every 
  locally noetherian abelian category $\A$.
\item The abelian category $\Rep(\C,\Mod A)$ is locally noetherian. 
\end{enumerate}
\end{thm}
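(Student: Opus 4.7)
The plan is to transport the proof of Theorem~\ref{th:repnoeth} almost verbatim to the setting of a small category $\C$ with a Gröbner enrichment, replacing the path category $\Q$ with $\C$ and the lexicographically extended total order on $Q(x)$ with the postulated enrichment $\preceq$ on each $\C(x,y)$. I would first observe that Lemmas~\ref{le:gen}, \ref{le:fg}, and \ref{le:ring} carry over with no change, since their proofs use only the adjunction isomorphism \eqref{eq:adj} (which holds for any small category) and the trivial fact that $A[-]$ reflects set inclusions.

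The essential step is the analog of Lemma~\ref{le:free}: for $x\in\C$ and a noetherian $M\in\A$, if the left ideals of $\C(x)$ satisfy the ascending chain condition, then $M[\C(x,-)]$ is noetherian in $\Rep(\C,\A)$. I intend to follow the proof of Lemma~\ref{le:free} line by line, using the Gröbner enrichment in place of the lexicographic order. The axiom (G1) delivers the implication $\p\le\psi\Rightarrow U_\psi\subseteq U_\p$: if $\p=\chi\psi$ with $\chi\in\C(t(\psi),t(\p))$, then $\psi\preceq\omega$ implies $\chi\psi\preceq\chi\omega$, so left composition with $\chi$ maps $\C(x,t(\psi))_\psi$ into $\C(x,t(\p))_\p$ with $\psi$ the unique preimage of $\p$, and the projection argument gives the inclusion. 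The axiom (G2) both provides a $\preceq$-maximal $\p$ in the set where $U(y)\cap M[\C(x,y)_\p]\neq V(y)\cap M[\C(x,y)_\p]$, and guarantees that the filtrations $\C(x,y)=\bigcup_\p \C(x,y)_\p$ and $\C(x,y)_\p\smallsetminus\{\p\}=\bigcup_{\p\prec\psi}\C(x,y)_\psi$ are directed. Finally, Lemma~\ref{le:poset} is applied to $(\C(x),\le)$, whose noetherianness is precisely hypothesis (1).

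With the generalized lemmas in place, the three implications unfold as in Theorem~\ref{th:repnoeth}. For (1)~$\Rightarrow$~(2), I reduce to $\A$ essentially small (by closing the image of a given representation under finite colimits, subobjects and quotients), pass to the locally noetherian completion $\bar\A$ via the restricted Yoneda embedding of Proposition~\ref{pr:locnoeth}, produce a generating family of noetherian objects in $\Rep(\C,\bar\A)$ via the analogs of Lemmas~\ref{le:gen} and~\ref{le:free}, and restrict back to $\Rep(\C,\A)$, which sits inside as a full subcategory closed under subobjects. The implication (2)~$\Rightarrow$~(3) is immediate from Example~\ref{ex:ring}, and (3)~$\Rightarrow$~(1) follows by combining the analogs of Lemma~\ref{le:fg} (so that $A[\C(x,-)]$ is finitely generated, hence noetherian under hypothesis (3)) and Lemma~\ref{le:ring} (so that left ideals of $\C(x)$ inject order-preservingly into subrepresentations of $A[\C(x,-)]$).

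The only point requiring vigilance, rather than a genuine obstacle, is verifying that (G1) and (G2) are a literal substitute for the two combinatorial inputs used in the quiver proof: compatibility of left composition with $\preceq$, and the existence of maximal elements in non-empty subsets of $\C(x,y)$. Once this correspondence is recorded, no new ideas are needed, and the content of the generalization lies in isolating the Gröbner-enrichment axioms.
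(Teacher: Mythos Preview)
Your proposal is correct and follows essentially the same approach as the paper's own proof: both argue that the quiver proof transports verbatim, with (G1) supplying the compatibility \eqref{eq:compos} and (G2) supplying the directedness of \eqref{eq:dir1}, \eqref{eq:dir2} together with the existence of a $\preceq$-maximal $\p$. Your write-up is in fact more explicit than the paper's, which simply points back to Lemma~\ref{le:free} and notes which axiom is used where.
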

\begin{proof}
  The special case that $\C$ is the path category of a quiver is
  Theorem~\ref{th:repnoeth}. In fact, the Gröbner enrichment for a
  path category is specified right before Lemma~\ref{le:free}.  The
  proof of the general case is essentially the same.  The crucial
  ingredient is Lemma~\ref{le:free} where the Gröbner enrichment is
  used; its definition is extracted from the proof of that lemma. For
  instance, condition (G1) is needed for \eqref{eq:compos}, and the
  condition (G2) ensures that in \eqref{eq:dir1} and \eqref{eq:dir2}
  the union is directed. 
\end{proof}

\begin{rem}
  The above theorem is a variation of the main result of
  \cite{Ri1986}.  This was rediscovered by Sam and Snowden in
  \cite{SS2017} and they provide many interesting examples.  Their
  notion of a \emph{Gröbner category} and our notion of a Gröbner
  enrichment basically agree, except that they require each poset
  $\C(x,y)$ to be \emph{well-ordered}. Passing from $\preceq$ to its
  opposite shows that both concepts are equivalent.  The work in
  \cite{Ri1986,SS2017} restricts to representations in module
  categories; so these proofs are different from the one given here.
\end{rem}

\begin{rem}
  The above definition of a Gröbner enrichment for every object in
  $\C$ means that the category $\C$ is enriched in the category of
  posets satisfying condition (G2), with morphisms the strictly
  increasing maps.
\end{rem}

\begin{rem}
  We write $\p\le\psi$ for $\p,\psi\in \C(x)$ if $\p=\chi\psi$ for
  some morphism $\chi$. In many examples the partial order $\preceq$ 
  is a \emph{refinement} of $\le$, which means
\[\p\le\psi\quad\implies\quad\p\preceq\psi.\]
In that case the condition in (G2) that every non-empty subset has a
maximal element is automatic when (1) in
Theorem~\ref{th:repnoeth-general} holds. So it seems natural to impose
this condition in (G2).
\end{rem}

\begin{rem}
  The above condition (G2) can be relaxed. We need that for every
collection $\X$ of subsets of $\C(x,y)$ that is closed under taking subsets and
directed unions the following holds: if $\C(x,y)\not\in\X$, there exists
$\p\in\C(x,y)$ such that
\[\{\psi\in \C(x,y)\mid \p\prec \psi\}\in\X\quad \text{and}\quad\{\psi\in \C(x,y)\mid \p\preceq \psi\}\not\in\X.\] 
\end{rem}

\subsection*{Acknowledgements} 
This work was supported by the Deutsche
Forschungsgemeinschaft (SFB-TRR 358/1 2023 - 491392403).

\end{document}